\newtheorem{thm}{Theorem}
\newtheorem{cor}[thm]{Corollary}
\theoremstyle{definition}
\newtheorem{rem}[thm]{Remark}
\newcommand{\bn}{\par\bigskip\noindent}
\newcommand{\pars}{\par\smallskip}
\newcommand{\adresse}{\par\bigskip \small\rm
Cimpri\v c, Jaka\par
Faculty of Mathematics and Physics\par
University of Ljubljana\par
Jadranska 21, SI-1000 Ljubljana, Slovenija\par
email: cimpric@fmf.uni-lj.si
\pars \pars \pars
Kuhlmann, Salma\par
Department of Mathematics and Statistics\par
University of Saskatchewan\par
Saskatoon, SK S7N 5E6, Canada\par
email: skuhlman@math.usask.ca
\pars \pars \pars
Marshall, Murray\par
Department of Mathematics and Statistics\par
University of Saskatchewan\par
Saskatoon, SK S7N 5E6, Canada\par
email: marshall@math.usask.ca}
\def\psd{\mathrm{Psd}}
\def\R{\mathbb{R}}
\def\rx{\R[\mathbf{x}]}
\begin{document}

\title{Positivity in power series rings}
\author {J.\ Cimpri\v c,
S.\ Kuhlmann, M. Marshall}
\thanks {
The second and third authors were partially supported by NSERC
Discovery grants.}
\subjclass[2000]{Primary 13F25, 14P10; Secondary 14L30, 20G20}
\begin{abstract}
We extend and generalize 
results of Scheiderer (2006) on the representation of polynomials
nonnegative on two-dimensional basic closed semialgebraic sets. Our
extension covers some situations where the defining polynomials do
not satisfy the transversality condition. Such situations arise
naturally when one considers semialgebraic sets invariant under finite
group actions.
\end{abstract}
\maketitle

\section{Introduction}

Let $\rx:=\R[x_1, \cdots, x_n]$ be the ring of polynomials in $n$ variables with real coefficients. A \textbf{preordering} of a general ring $A$ (commutative with $1$) is a subsemiring of $A$ which contains the squares. In other words, a preordering of $A$ is a subset of $A$ which contains all $f^2$, $f\in A$, and is closed under addition and multiplication.
For a finite subset $S=\{g_1, ..., g_s\}$ of $\rx$, we write $T_S$ for the preordering of $\rx$ generated by $S$,
and $K_S$ for the set of all $x \in \R^n$ satisfying $g_1(x) \ge 0, \dots, g_s(x) \ge 0$ (the basic closed semialgebraic set defined by $S$). Note that $K_S$ is
uniquely determined by $T_S$, but typically $T_S$ is not uniquely determined by $K_S$.
For a subset $K$ of $\R^n$, we write $\psd(K)$ for the set of all elements of $\rx$ that are nonnegative on $K$.
We always have that $T_S \subseteq \psd(K_S)$. The preordering $T_S$ is said to be \textbf{saturated} if
$T_S=\psd(K_S)$.

In this paper we investigate what geometric properties of $S$ imply that $T_S$
is saturated. This line of investigation has been pursued by Scheiderer in a series of papers.
In \cite{S1}, Scheiderer showed that $T_S$ is never saturated if $\dim(K_S)\geq 3$.
The case $\dim(K_S)\leq 1$ is fairly well understood; see \cite{km}, \cite{kms}, \cite{pl}, \cite{S2}.
We focus here on the 2-dimensional case, more precisely, on the affine 2-dimensional case, i.e., $n=\dim(K_S) = 2$.

We consider only the compact case. In the non-compact case little is known; 
see \cite[Open Problem 6]{km} and \cite[Remark 3.16]{S3}. By \cite[Remark 6.7]{S1},
$T_S$ is not saturated if $K_S$ contains a two-dimensional cone.
In the compact case, we have the following result of Scheiderer \cite[Cor. 3.3]{S3}:

\begin{thm} \label{sat3}
Let $S=\{g_1, ..., g_s\}$ be irreducible polynomials in $\R[x,y]$,
let $C_i$ be the plane affine curve $g_i=0$ ($i=1, ..., s$). Assume:
\begin{enumerate}
\item $K_S$ is compact
\item $C_i$ has no real singular points ($i=1, ..., s$)
\item the real points of intersection of any two of the $C_i$ are
transversal, and no three of the $C_i$ intersect in a real point.
\end{enumerate}
Then $T_S$ is saturated.
\end{thm}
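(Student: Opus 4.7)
My plan is to follow the local-global strategy pioneered by Scheiderer. Since $K_S$ is compact, there is a local-global principle (developed in \cite{S2}, \cite{S3} and refined in the present paper) reducing saturation of $T_S$ to a pointwise assertion: for every $p \in K_S$, the image of $T_S$ in the completed local ring $\widehat{\mathcal O}_{\R^2, p} \cong \R[[u,v]]$ should contain every germ nonnegative on $K_S$ near $p$. Hypotheses (2) and (3) guarantee that each $p \in K_S$ lies on at most two of the $C_i$, and that in either the one-curve or two-curve case the local geometry is as tame as possible (smooth point, or a transversal crossing).

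I would then split into three local cases. If no $g_i$ vanishes at $p$, each $g_i$ is a positive unit in $\widehat{\mathcal O}_p$ and hence a square (via Hensel), so the assertion is trivial. If exactly one $g_i$ vanishes, smoothness of $C_i$ at $p$ provides parameters $(u,v)$ with $g_i$ equal to $u$ times a positive unit, and modulo squares I reduce to the one-wall local Positivstellensatz: every $f \in \R[[u,v]]$ with $f \geq 0$ on $\{u \geq 0\}$ has the form $\sigma_0 + u \sigma_1$ with each $\sigma_i$ a sum of squares. This follows by writing $f = f(0,v) + u \cdot q(u,v)$, applying the classical one-variable result to $f(0,v) \in \R[[v]]$, and absorbing the $u$-divisible remainder into $u \sigma_1$.

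The main obstacle is the two-curve case, where $p$ is a transversal intersection of $C_i$ and $C_j$. Local parameters can be chosen so that $g_i$ and $g_j$ equal $u$ and $v$ times positive units, while the other $g_\ell$ are positive units. The problem reduces to a \emph{local Positivstellensatz on the quadrant}: every $f \in \R[[u,v]]$ nonnegative on $\{u \geq 0, v \geq 0\}$ can be written as $\sigma_0 + u \sigma_1 + v \sigma_2 + uv \sigma_3$, with each $\sigma_k$ a sum of squares of power series. I would prove this by induction on the order of $f$: peel off the restrictions $f(u,0)$ and $f(0,v)$ (handled by the one-variable case along each axis), show that the remainder is divisible by $uv$, and iterate. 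Transversality is essential here: with three or more branches through $p$ the analogous local statement can fail, so no such direct reduction is possible.

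To finish, I would glue the local representations into a global one. Since $K_S$ is compact and only finitely many points require nontrivial work (the boundary and the finitely many intersection points), a standard patching argument, in the style of Scheiderer's Artin-approximation techniques for preorderings, promotes the local decompositions to a single global representation, completing the proof.
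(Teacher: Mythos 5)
Your overall strategy---reduce to the completed local rings via Scheiderer's local--global criterion, then prove a local Positivstellensatz in $\R[[u,v]]$ for the smooth point, the one-wall, and the transversal quadrant cases---is exactly the route the paper takes (Theorem \ref{t1m} combined with Theorem \ref{t2m}, via the Transfer Principle). The global patching you invoke at the end is not an extra step to be supplied: it is precisely the content of Theorem \ref{t1m}, and once that is cited the work is entirely local.

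However, the proofs you offer for the local power-series statements are wrong, and in a way that cannot be repaired by the same method. For the one-wall case you claim that writing $f = f(0,v) + u\,q(u,v)$ and handling $f(0,v)$ by the one-variable theory lets you ``absorb the $u$-divisible remainder into $u\sigma_1$.'' But $q(u,v)$ has no reason to be a sum of squares. Take $f = (u-v)^2$, which is nonnegative everywhere, hence certainly at all orderings with $u>0$. Then $f(0,v)=v^2$ and $q(u,v) = u - 2v$, which is not a sum of squares in $\R[[u,v]]$ (it even changes sign on $u>0$). The same defect kills the inductive quadrant argument: for $f = (u-v)^2$ you get $f - f(u,0) - f(0,v) + f(0,0) = -2uv$, so the ``remainder divisible by $uv$'' is $-2uv$, and the cofactor $-2$ is not a sum of squares. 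In both cases, nothing in your decomposition forces the cofactors of $u$, $v$, or $uv$ to land in $\sum\R[[u,v]]^2$.

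The correct argument, and the one the paper uses for Theorem \ref{t2m}(2) and (3), is to pass to an extension ring in which the constraint disappears: for the one-wall case, work in $\R[[\sqrt{u},v]]$. Every ordering of its fraction field restricts to an ordering of $\R((u,v))$ in which $u=(\sqrt{u})^2 > 0$, so $f$ is psd there and is a sum of squares $f = \sum h_i^2$ by the unconstrained result (Theorem \ref{t2m}(1), essentially Bochnak--Risler). Decomposing $h_i = h_{i0} + h_{i1}\sqrt{u}$ with $h_{i0},h_{i1}\in\R[[u,v]]$ and expanding gives $f = \sum h_{i0}^2 + u\sum h_{i1}^2$ (the $\sqrt{u}$ cross-terms cancel because $f\in\R[[u,v]]$). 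The quadrant case is identical in $\R[[\sqrt{u},\sqrt{v}]]$. Your outline would be correct with this substitution; as written, the two key local lemmas are false as proved.
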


The main goal of this paper is to show that saturation
holds in certain other compact cases as well, e.g., if
$S = \{ x,1-x, y, x^2-y\}$ or $S =\{ 1+x,1-x, y, x^2-y\}$.
In these examples, the boundary curves $y=0$ and $y=x^2$ share a
common tangent at the origin, so Theorem \ref{sat3} does not apply. The fact that saturation holds in these examples is a consequence of our main result, Corollary \ref{c7m}, which is an extension of Theorem \ref{sat3}. 

Our original motivation comes from examples which
arise naturally while studying semialgebraic sets $K_{S'}$ described
by a set $S'$ of polynomials invariant under an action of a finite
group $G$. The corresponding preordering $T_{S'}$ will typically not
be saturated but it can still be ``saturated for invariant polynomials''
(we refer to this as ``$G$-saturation").
The orbit map $\pi$ (see \cite{CKS}) relates the $G$-saturation of
$T_{S'}$ to the saturation of certain preordering $T_{\tilde{S'}}$
corresponding to $\pi(K_{S'})=K_{\tilde{S'}}$. In many cases,
the latter follows from our Corollary \ref{c7m}.
An example is given in Section 3.

At the same time, Corollary \ref{c7m} does not cover all interesting cases; in the
Concluding Remarks, we consider some of the remaining cases.


\section{Saturation in dimension two} \label{mmnote}
We focus on the case of a compact basic closed
semialgebraic set. In \cite[Cor. 3.17]{S2}, Scheiderer proves a
useful `local-global' criterion, extending \cite[Cor. 3]{Sc1}, for deciding when a polynomial
non-negative on a compact basic closed semialgebraic set lies in the
associated preordering of the polynomial ring:

\begin{thm} \label{t1m}
Suppose $f,g_1,\dots,g_s \in \rx$,
the subset $K$ of $\mathbb{R}^n$ defined by the inequalities $g_i\ge 0$,
$i=1,\dots,s$ is compact, $f\ge0$ on $K$, and $f$ has just finitely
many zeros in $K$. Then the following are equivalent:
\begin{enumerate}
\item $f$ lies in the preordering of $\rx$ generated by $g_1,\dots,g_s$.
\item For each zero $p$ of $f$ in $K$, $f$ lies in the preordering of the completion of
$\rx$ at $p$  generated by $g_1,\dots,g_s$.
\end{enumerate}
\end{thm}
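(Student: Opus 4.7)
The implication (1) $\Rightarrow$ (2) is immediate from base change: any identity $f = \sum_e \sigma_e\, g_1^{e_1}\cdots g_s^{e_s}$ with $\sigma_e\in\sos$ remains valid after passing from $\rx$ to the completion $\widehat{\rx}_p$, and $\sos \subseteq \sum \widehat{\rx}_p^2$. So the content is (2) $\Rightarrow$ (1).

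My first step would be to reduce to the case where the preordering is archimedean. Since $K$ is compact, pick a constant $N$ with $N - \sum x_i^2 > 0$ on $K$ and adjoin $q := N - \sum x_i^2$ to the generating set. The enlarged system defines the same $K$, and hypothesis (2) is preserved at every $p\in K$ since $q(p)>0$ makes $q$ a unit, hence a square, in $\widehat{\rx}_p$. The enlarged preordering is archimedean by the W{\"o}rmann/Schm{\"u}dgen argument, so it suffices to prove the theorem under this assumption.

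Assume $T_S$ archimedean and write $Z(f)\cap K = \{p_1,\dots,p_r\}$. For each $j$, fix a representation $f = \sum_e \hat\sigma_{e,j}\,g^e$ with $\hat\sigma_{e,j}\in\sum\widehat{\rx}_{p_j}^2$, and truncate (via an Artin-style approximation of SOS expansions) to polynomials $\sigma_{e,j}\in\sos$ agreeing with $\hat\sigma_{e,j}$ modulo a high power of the maximal ideal at $p_j$. Next construct polynomials $b_1,\dots,b_r\in T_S$ with $b_j(p_j)>0$, each $b_j$ vanishing to high order at every other $p_k$, and $\sum_j b_j$ strictly positive on $K$; such preordering-valued ``bump'' functions exist by polynomial interpolation combined with Putinar's archimedean Positivstellensatz. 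The latter also allows one to invert $\sum_l b_l$ inside $T_S$ up to controllable error, yielding a candidate
\[
F := \sum_{j=1}^r \frac{b_j}{\sum_l b_l}\sum_e \sigma_{e,j}\,g^e
\]
lying in $T_S$. By construction $f-F$ is non-negative on $K$ and vanishes to arbitrarily prescribed order at every $p_j$; for sufficiently high orders of vanishing a final application of the archimedean Positivstellensatz places $f-F\in T_S$, whence $f\in T_S$.

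The main technical obstacle is precisely this globalization: turning formal power series SOS expansions at each zero into polynomial SOS approximations and patching them via preordering-valued bump functions. Both ingredients rely essentially on archimedeanness and on $Z(f)\cap K$ being finite, and together form the technical core of Scheiderer's local-global principle in \cite{Sc1} as refined in \cite[Cor.~3.17]{S2}.
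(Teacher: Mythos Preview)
The paper does not give its own proof of this result; it is quoted from Scheiderer \cite[Cor.~3.17]{S2} (extending \cite{Sc1}), so there is no in-paper argument to compare your sketch against.

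Your outline has the right overall shape---truncate the formal representations to polynomial ones, patch them, then finish with a global positivity argument---and you rightly flag the patching as the crux. But several of the concrete steps do not work as stated. The archimedean reduction is either unnecessary (Schm\"udgen--W\"ormann already makes $T_S$ archimedean once $K$ is compact) or ineffective (if you genuinely enlarge to $T_{S\cup\{q\}}$ you cannot return to $f\in T_S$ without invoking that same fact). Your candidate $F$ involves $b_j/\sum_l b_l$, a rational function; ``inverting $\sum_l b_l$ inside $T_S$ up to controllable error'' is not a well-defined operation in a preordering. Even granting a polynomial $F\in T_S$ that agrees with $f$ to high order at each $p_j$, there is no reason for $f-F\ge 0$ on $K$: away from the zeros $F$ is an uncontrolled combination of the local approximants $\sum_e\sigma_{e,j}g^e$ and need not sit below $f$. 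Finally, the closing appeal to ``the archimedean Positivstellensatz'' is circular: Schm\"udgen/Putinar require \emph{strict} positivity on $K$, and a nonnegative element vanishing---to however high an order---at the $p_j$ is precisely the situation the theorem is about. High-order vanishing alone does not force membership in $T_S$; arranging the remainder to be strictly positive on $K$ is exactly the delicate step in Scheiderer's argument, and your sketch does not supply it.
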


In the two--dimensional case this allows one to show that certain
finitely generated preorderings are saturated; see \cite{S3}.
For example, Theorem \ref{sat3} can be obtained  by combining
Theorem \ref{t1m} with the following result for power series rings,
using the Transfer Principle:

\begin{thm} \label{t2m}
Suppose $f \in \mathbb{R}[[x,y]]$.
\begin{enumerate}
\item If $f\ge 0$ at each ordering of $\R((x,y))$ then $f$ is a sum
of squares in $\R[[x,y]]$.
\item If $f\ge 0$ at each ordering of $\R((x,y))$ satisfying $x> 0$
then $f$ lies in the preordering of $\R[[x,y]]$ generated by $x$.
\item If $f \ge 0$ at each ordering of $\R((x,y))$ satisfying $x>0$
and $y > 0$ then $f$ lies in the preordering of $\R[[x,y]]$
generated by $x$ and $y$.
\end{enumerate}
\end{thm}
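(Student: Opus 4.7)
The plan is to prove (1) first as the core of the theorem and then deduce (2) and (3) from (1) by a substitution trick exploiting parity in the new variables.

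For (1), I would combine Artin's theorem for the field $\R((x,y))$ with the factorization structure of the UFD $\R[[x,y]]$. Since $f$ is totally positive in $\R((x,y))$, Artin gives a representation $b^2 f = \sum a_i^2$ with $a_i, b \in \R[[x,y]]$. To bring this representation inside the ring, I factor $b$ into irreducibles and cancel one prime at a time. For an irreducible $p \mid b$ with a real branch at the origin, the residue domain $\R[[x,y]]/(p)$ is real (orderings come from either side of the branch), so $\sum \bar a_i^2 \equiv 0 \pmod{p}$ forces $p \mid a_i$ for all $i$, and a factor of $p^2$ can be divided out. The remaining obstruction comes from definite irreducibles $p$ (those whose real zero set reduces to $\{(0,0)\}$ locally, e.g.\ $x^2+y^2$); such a $p$ is itself psd, and one shows directly that it is a sum of two squares in $\R[[x,y]]$ by analyzing its initial form and extracting a square root via Hensel's lemma. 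Finally, a unit $u \in \R[[x,y]]$ is psd iff $u(0,0) > 0$, and then Hensel's lemma applied to $T^2 - u$ gives $u = v^2$.

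For (2), set $g(t,y) := f(t^2, y) \in \R[[t,y]]$. Every ordering of $\R((t,y))$ satisfies $t^2 > 0$, so its restriction along $x \mapsto t^2$ is an ordering of $\R((x,y))$ in which $x > 0$; hence $g$ is psd at every ordering of $\R((t,y))$. By (1), $g = \sum h_i^2$ with $h_i \in \R[[t,y]]$. Writing each $h_i = a_i + t b_i$ uniquely with $a_i, b_i \in \R[[t^2,y]] = \R[[x,y]]$, the evenness of $g$ in $t$ forces $\sum a_i b_i = 0$, so that
\[
f(x,y) \;=\; \sum a_i(x,y)^2 \;+\; x \sum b_i(x,y)^2,
\]
which lies in the preordering generated by $x$. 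Part (3) follows by the analogous double substitution $g(s,t) := f(s^2, t^2)$: apply (1), then split each $h_i \in \R[[s,t]]$ by the four parities of $(s,t)$ and collect terms to obtain $f = \sigma_0 + x\sigma_1 + y\sigma_2 + xy\,\sigma_3$ with each $\sigma_j$ a sum of squares in $\R[[x,y]]$.

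The main obstacle is part (1), and specifically the treatment of definite irreducibles. The real-branch case is handled by a clean cancellation argument inside a real residue domain, but showing that a definite irreducible (and thereby clearing the denominator in the SOS representation coming from Artin's theorem) is itself a sum of squares requires the more delicate Hensel-lemma analysis of initial forms in the complete regular $2$-dimensional local ring $\R[[x,y]]$. Once (1) is established, the parity computations that yield (2) and (3) are routine.
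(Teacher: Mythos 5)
For parts (2) and (3) your argument coincides with the paper's: substituting $x=t^2$ (resp.\ $x=s^2$, $y=t^2$) is the same as passing to the extension ring $\R[[\sqrt{x},y]]$ (resp.\ $\R[[\sqrt{x},\sqrt{y}]]$), and the splitting by parity with vanishing cross terms is exactly the decomposition $f_i=f_{i1}+f_{i2}\sqrt{x}$ that the paper uses; you simply spell out the cross-term cancellation that the paper leaves implicit. So for the parts that actually carry weight in the rest of the paper, your route is the same.

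For part (1), the paper does not prove it: it cites the analytic argument of Bochnak--Risler and Marshall's book, recording only that the representation can be taken as a sum of two squares. You instead sketch an algebraic proof, and here there are real gaps. First, the Artin plus denominator-clearing route is more awkward than factoring $f$ directly: the cancellation argument only removes primes $p$ for which $\R[[x,y]]/(p)$ is formally real, and after those are gone you are left with $b'^2 f=\sum a_i'^2$ where $b'$ is a product of definite primes, from which ``$f$ is sos'' does not follow without further work. Second, and more seriously, your treatment of a definite irreducible $p$ is not right. The initial form of such a $p$ need not be a positive definite form at all: for $p=x^2+y^4$ the initial form is $x^2$, a perfect square, yet $p$ has no square root in $\R[[x,y]]$, so there is no Hensel lifting to perform. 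The correct mechanism is complexification: since $\R[[x,y]]/(p)$ is not formally real, $p$ splits in $\C[[x,y]]$ as $p=u\,q\bar q$ with $u$ a real positive unit, and writing $\sqrt{u}\,q=a+ib$ with $a,b\in\R[[x,y]]$ gives $p=a^2+b^2$. Combined with the facts that a psd unit is a square (here Hensel really does apply) and that a product of sums of two squares is again a sum of two squares, this yields the two-square representation of $f$ from its factorization directly, with no need for Artin's theorem.
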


\begin{proof} (1) is well-known. It can be proved using a modification
of the analytic argument given in \cite[Lem. 7a]{BR}. The proof
shows, in fact, that $f$ is a sum of two squares.  See \cite[Th.
1.6.3]{mar} for more details. (2) (resp., (3)) follows immediately from
(1) by going to the extension ring $\R[[\sqrt{x},y]]$ (resp.,
to the extension ring $\R[[\sqrt{x}, \sqrt{y}]]$). E.g., to
prove (2), apply (1) to $\R[[\sqrt{x},y]]$ to deduce $f=\sum
f_i^2$, $f_i \in \R[[\sqrt{x},y]] $. Decomposing $f_i=
f_{i1}+f_{i2}\sqrt{x}$, $f_{ij} \in \R[[x,y]]$, and expanding,
yields $f= \sum f_{i1}^2+\sum f_{i2}^2x$.
\end{proof}

We will prove the following extension of Theorem \ref{t2m}.

\begin{thm} \label{t3m}
Suppose $f \in \R[[x,y]]$ and $n$ is a positive integer.
\begin{enumerate}
\item If $f \ge 0$ at each ordering of $\R((x,y))$ satisfying $y >0$
and $x^{2n}-y > 0$ then $f$ lies in the preordering of
$\mathbb{R}[[x,y]]$ generated by $y$ and $x^{2n}-y$.
\item If $f \ge 0$ at each ordering of $\mathbb{R}((x,y))$ satisfying $x>0$,
$y > 0$ and $x^n-y > 0$  then $f$ lies in the preordering of
$\mathbb{R}[[x,y]]$ generated by $x$, $y$ and $x^n-y$.
\end{enumerate}
\end{thm}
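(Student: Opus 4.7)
The plan for part (1) is to pass to an extension of $\R[[x,y]]$ in which both positivity conditions $y>0$ and $x^{2n}-y>0$ become automatic, then to descend. Substituting $y = x^{2n}v^2$ defines a continuous ring homomorphism $\phi:\R[[x,y]] \to \R[[x,v]]$ with
\[
\phi(y) = (x^n v)^2, \qquad \phi(x^{2n}-y) = x^{2n}(1-v^2) = \bigl(x^n\sqrt{1-v^2}\,\bigr)^2,
\]
where $\sqrt{1-v^2} = 1 - v^2/2 - v^4/8 - \cdots \in \R[[v]]$ is the formal Taylor series. Since both images are squares, every ordering of $\R((x,v))$ restricts through $\phi$ to an ordering of $\R((x,y))$ satisfying both conditions, and the hypothesis on $f$ forces $\phi(f) \ge 0$ on every ordering of $\R((x,v))$. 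Theorem~\ref{t2m}(1) then yields $\phi(f) = \sum_i h_i^2$ in $\R[[x,v]]$.

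The main obstacle is descending this sum-of-squares representation to a preordering representation in $\R[[x,y]]$. Since $\phi(f) = f(x, x^{2n}v^2)$ is even in $v$, writing each $h_i = h_i^+(x,v^2) + v\,h_i^-(x,v^2)$ and collecting even-in-$v$ terms gives $\phi(f) = A(x,V) + V\,B(x,V)$ with $A, B$ sums of squares in $\R[[x,V]]$, where $V = v^2$; equivalently,
\[
f(x, x^{2n}V) = A(x,V) + V\,B(x,V) \quad\text{in } \R[[x,V]].
\]
Pulling back along the non-surjective embedding $y \mapsto x^{2n}V$ is not automatic. To accomplish it I would exploit the $y \leftrightarrow x^{2n}-y$ involution of the problem: the parallel substitution $y = x^{2n}(1-v^2)$ yields a second identity $f(x,x^{2n}V) = C(x,V) + (1-V)\,D(x,V)$ with $C, D$ sums of squares in $\R[[x,V]]$. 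Combining the two identities, and noting that under $V = y/x^{2n}$ the monomials $V^j(1-V)^k$ correspond to $y^j(x^{2n}-y)^k/x^{2n(j+k)}$, should allow one to extract sums of squares $\sigma_{00}, \sigma_{10}, \sigma_{01}, \sigma_{11} \in \R[[x,y]]$ with $f = \sigma_{00} + \sigma_{10}\,y + \sigma_{01}(x^{2n}-y) + \sigma_{11}\,y(x^{2n}-y)$.

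Part (2) will follow from part (1) by the standard square-root extension used in the proof of Theorem~\ref{t2m}(2). Passing to $\R[[u,y]]$ with $x = u^2$, every ordering of $\R((u,y))$ automatically satisfies $x = u^2 > 0$, so the hypothesis gives $f(u^2,y) \ge 0$ on every ordering of $\R((u,y))$ with $y>0$ and $u^{2n}-y>0$. Applying part (1) in $\R[[u,y]]$, with $u$ in place of $x$, yields
\[
f(u^2,y) = \tau_0 + \tau_1\,y + \tau_2\,(u^{2n}-y) + \tau_3\,y(u^{2n}-y)
\]
with $\tau_i$ sums of squares in $\R[[u,y]]$. Decomposing each square summand $g$ as $g = g^+(u^2,y) + u\,g^-(u^2,y)$, the even-in-$u$ part of $\tau_i$ takes the form $A_i(u^2,y) + u^2 B_i(u^2,y)$ with $A_i, B_i$ sums of squares in $\R[[u^2,y]]$. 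Since $y$, $u^{2n}-y$, and $y(u^{2n}-y)$ are each even in $u$, taking the even-in-$u$ part of the displayed equation and identifying $u^2 = x$ and $u^{2n} = x^n$ produces the desired representation of $f$ in the preordering generated by $x$, $y$, and $x^n - y$ in $\R[[x,y]]$.
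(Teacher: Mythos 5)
The first half of your argument is sound: the substitution $\phi:\R[[x,y]]\to\R[[x,v]]$, $y\mapsto x^{2n}v^2$, is a well-defined injective ring map, $\phi(y)$ and $\phi(x^{2n}-y)$ are squares, so Theorem~\ref{t2m}(1) gives $\phi(f)=\sum h_i^2$, and the even-in-$v$ decomposition does yield
$f(x,x^{2n}V)=A(x,V)+V\,B(x,V)$ with $A,B$ sums of squares in $\R[[x,V]]$. Your reduction of part~(2) to part~(1) via $x=u^2$ is also correct and matches the paper. The gap is exactly where you flag it: descending the representation from $\R[[x,V]]$ to $\R[[x,y]]$. Two concrete obstructions make the proposed ``combine the two identities'' step fail. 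First, the second substitution $y=x^{2n}(1-v^2)$, after taking even parts with $W=v^2$, produces $f(x,x^{2n}(1-W))=C(x,W)+W\,D(x,W)$; to compare this with the first identity you would need to substitute $W=1-V$, which is not a legitimate operation in $\R[[x,W]]$ because $1-V$ does not lie in the maximal ideal (the series $C,D$ are not polynomials in $W$, so the substitution diverges formally). Second, and more fundamentally, the coefficients $A,B$ need not lie in the image of $\R[[x,y]]\hookrightarrow\R[[x,V]]$, which consists precisely of series all of whose monomials $x^aV^b$ satisfy $a\ge 2nb$; already $\sqrt{1-V}\in\R[[x,V]]$ violates this, and nothing forces an arbitrary sum-of-squares representation of $\phi(f)$ to respect the sector. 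This is the structural difference from the extensions used in Theorem~\ref{t2m}(2),(3) and in your reduction of (2) to (1): there $\R[[\sqrt{x},y]]$ is a free $\R[[x,y]]$-module of rank $2$, so a parity (averaging) argument descends the representation, whereas $\R[[x,V]]$ is not finitely generated over the image of $\R[[x,y]]$ and no analogous descent exists.

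The paper's actual proof takes a completely different route, working inside $\R[[x,y]]$ throughout. It uses Weierstrass preparation to factor $f=\pm x^m\prod p_i^{m_i}$, reduces to the squarefree case with $m\in\{0,1\}$, disposes of irreducible factors of constant sign via Theorem~\ref{t2m}(2), and then for each remaining irreducible $p$ analyzes the complete discretely valued residue field $L=\R((x))[y]/(p)$ and its orderings via Baer--Krull. It establishes that $L$ has no ordering with $0<\overline y<x^{2n}$, that it does have orderings with $\overline y>x^{2n}$ and with $\overline y<0$, and deduces $0<v(\overline y)<v(x^{2n})$. The representation in the preordering is then built directly, writing $f=f(0)+\sum_{\underline j\ne 0}\pm x^m b_{\underline j}\,y^{j_1+\cdots+j_\ell}$, showing $f(0)$ is a square in $\R[[x]]$, and choosing shift elements $c_{\underline j}\in\R[[x]]$ so that each residual term is a square in $\R[[x]]$. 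Salvaging your plan would require essentially reproving this kind of valuation-theoretic control over the coefficients, at which point the passage to $\R[[x,v]]$ gives no simplification. As it stands, the descent step is a genuine missing idea, not a routine verification.
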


\begin{rem} \label{rm5} Suppose $n$ is odd, $n\ge 3$. Then:
\begin{enumerate}
\item[(i)] For every ordering of $\R[[x,y]]$, $y\ge0$ and $x^n -y \ge 0$
$\Rightarrow$ $x \ge 0$, but $x$ is not in the preordering of $\R[[x,y]]$
generated by $y$ and $x^n-y$. This shows that an obvious attempt to strengthen
Theorem \ref{t3m} fails.
\item[(ii)] Similarly, for every ordering of $\R[[x,y]]$, $x^n - y^2 \ge 0$
$\Rightarrow$ $x\ge 0$, but $x$ is not in the preordering of $\R[[x,y]]$
generated by $x^n-y^2$.
\end{enumerate}
Note: Going to the extension ring $\R[[x,\sqrt{y}]]$, we see that
assertions (i) and (ii) are essentially equivalent.
\end{rem}

We postpone the proof of Theorem \ref{t3m} to Section \ref{proof}.
For now we only explain how Theorems \ref{t1m}, \ref{t2m} and \ref{t3m} can
be combined to yield the promised extension of Theorem \ref{sat3}:

\begin{cor} \label{c7m} Let $S=\{g_1,...,g_s\}$ be irreducible polynomials in $\R[x,y]$.
Suppose that $K=K_S \subseteq \R^2$ is compact, and, for each boundary point $p$ of
$K$, either
\begin{enumerate}
\item there exists $i$ such that $p$ is a non-singular zero of $g_i$, and
$K$ is defined locally at $p$ by the single inequality $g_i\ge 0$; or
\item there exists $i,j$ such that $p$ is a non-singular zero of $g_i$
and $g_j$, $g_i$ and $g_j$ meet transversally at $p$,
and $K$ is defined locally at $p$ by $g_i\ge0$, $g_j\ge 0$; or
\item there exists $i,j$ such that $p$ is a non-singular zero of $g_i$ and $g_j$,
$g_i$ and $g_j$ share a common tangent at $p$ but do not cross each other
at $p$, and $K$ is described locally at $p$ as the region between $g_i =0$
and $g_j=0$; or
\item there exists $i,j,k$ such that $p$ is a non-singular zero of $g_i$, $g_j$
and $g_k$, $g_i$ and $g_j$ share a common tangent at $p$, $g_i$ and $g_k$
meet transversally at $p$, and $K$ is described locally at $p$ as the part
of the region between $g_i=0$ and $g_j=0$ defined by $g_k\ge 0$.
\end{enumerate}
Then the preordering of $\R[x,y]$ generated by $g_1,\dots,g_s$ is saturated.
\end{cor}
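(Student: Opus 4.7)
My plan is to use Scheiderer's local--global criterion (Theorem~\ref{t1m}) to reduce saturation to a local question at each zero $p$ of a given $f\in\psd(K)$, then resolve that local question by applying Theorems~\ref{t2m} and~\ref{t3m} via the Transfer Principle, which identifies the completion $\widehat{R_p}$ of $\R[x,y]$ at $p$ with $\R[[x,y]]$ after recentering at $p$. Thus, for each $f\in\psd(K)$---after the standard reduction to the case of finitely many zeros in $K$---one must show that at every zero $p$ of $f$ in $K$, $f$ lies in the preordering of $\widehat{R_p}$ generated by $g_1,\dots,g_s$. For interior points $p$, Theorem~\ref{t2m}(1) applies after translation. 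For a boundary point $p$, one of the four cases (1)--(4) holds, and each reduces, via a suitable analytic change of coordinates, to a statement about one of Theorem~\ref{t2m}(2,3) or Theorem~\ref{t3m}(1,2).

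Cases (1) and (2) are handled as in Scheiderer's original proof of Theorem~\ref{sat3}: in (1), straighten the single non-singular curve $g_i=0$ to $y=0$ so that $g_i$ becomes a positive unit times $y$ and apply Theorem~\ref{t2m}(2) (with the roles of $x$ and $y$ reversed); in (2), transversality lets one straighten both curves simultaneously to the coordinate axes, so that $g_i,g_j$ reduce to $x,y$ up to positive units, and Theorem~\ref{t2m}(3) concludes. In both cases positive units on the local region are absorbed as squares in the completion.

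Cases (3) and (4), which do not appear in Theorem~\ref{sat3}, are the heart of the extension. In case (3), since $C_i$ and $C_j$ are non-singular, tangent at $p$, and do not cross, the order of contact is some even integer $2n$; I would find a formal change of coordinates at $p$ that straightens $C_i$ to $\{y=0\}$ and then uses a further $x$-substitution (extracting a $(2n)$-th root of a positive unit) to bring $C_j$ into the form $\{y=x^{2n}\}$, so that the pair $g_i,g_j$ becomes $y$ and $x^{2n}-y$ up to positive units. Theorem~\ref{t3m}(1) then applies. Case (4) is analogous: the additional transversal curve $g_k$ can be straightened to $\{x=0\}$ simultaneously with the normalization of $C_i,C_j$ above, now with order of contact $n$ not necessarily even (because $g_k\ge0$ restricts to a ``half'' of the region between $C_i$ and $C_j$), giving $g_i,g_j,g_k$ as $y$, $x^n-y$, and $x$ up to positive units, so Theorem~\ref{t3m}(2) concludes.

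The main obstacle I expect is the construction of the local analytic normal forms in cases (3) and (4): one must produce a single analytic coordinate change that simultaneously normalizes the tangent pair (with the correct order of contact) and, in case (4), the additional transversal curve, while controlling the positive-unit factors so that the preordering generated in $\widehat{R_p}$ by the original $g$'s coincides with the one generated by the canonical forms. The reduction from a general $f\in\psd(K)$ to finitely many zeros, required for Theorem~\ref{t1m} to apply, is a further technical point that should be handled by a perturbation or stability argument along the lines of \cite{S2,S3}.
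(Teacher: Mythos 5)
Your proposal follows essentially the same route as the paper: reduce to finitely many zeros so that Theorem~\ref{t1m} applies, then at each zero $p$ apply Theorems~\ref{t2m} and~\ref{t3m} via the Transfer Principle after normalizing local parameters, with cases (1)--(4) mapping to parts (2), (3) of Theorem~\ref{t2m} and parts (1), (2) of Theorem~\ref{t3m} respectively. Two small points of execution differ from the paper. The reduction to finitely many zeros is not a perturbation or stability argument but a direct algebraic one: since the hypotheses force $K$ to be the closure of its interior, one factors $f = f_0^2 f_1$ with $f_1$ square-free and then removes any $g_i$ dividing $f_1$, after which the remaining factor has only finitely many zeros on $K$. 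And in cases (3), (4) the paper normalizes without any root extraction: take local parameters $\overline{x}, g_i$ (with $\overline{x}=g_k$ in case (4)), use the Preparation Theorem to write $hg_j = g_i + \overline{x}^n k$, hence $sg_i + tg_j = \overline{x}^n$ with $s,t$ positive units (and $n$ even in case (3)), and set $\overline{y}=sg_i$, so $\overline{x}^n - \overline{y} = tg_j$; absorbing the positive units as squares, the preordering generated by $g_i, g_j$ in the completion coincides with the one generated by $\overline{y}$ and $\overline{x}^n-\overline{y}$, which is exactly what Theorem~\ref{t3m} requires.
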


\begin{proof}
Let $T$ denote the preordering of $\R[x,y]$ generated by
$g_1,\dots,g_s$. We wish to show that $f\in \R[x,y]$, $f\ge 0$
on $K$ $\Rightarrow$ $f\in T$. We may assume $K\ne \emptyset$, $f\ne
0$. The hypothesis implies, in particular, that $K$ is the closure
of its interior. This allows us to reduce further to the case where
$f$ is square-free and $g_i \nmid f$ for each $i$. In this
situation, $f$ has only finitely many zeros in $K$, so Theorem
\ref{t1m} applies, i.e., to show $f\in T$, it suffices to show that,
for each zero $p$ of $f$ in $K$, $f$ lies in the preordering of the
completion of $\R[x,y]$ at $p$ generated by $g_1,\dots,g_s$.
If $p$ is an interior point of $K$ this follows from Theorem
\ref{t2m}(1). If $p$ is a boundary point of $K$ satisfying (1)
(resp., (2), resp., (3), resp., (4)) then it follows from Theorem
\ref{t2m}(2) (resp., Theorem \ref{t2m}(3), resp., Theorem
\ref{t3m}(1), resp., Theorem \ref{t3m}(2)). We use the Transfer Principle
and apply Theorems \ref{t2m} and \ref{t3m} with $x = \overline{x}$,
$y =\overline{y}$, where $\overline{x}, \overline{y}$ are suitably
chosen local parameters at $p$. If $p$ is an interior point of $K$
we choose $\overline{x}=x-a$, $\overline{y}=y-b$ where $p= (a,b)$.
In case (1), we choose local parameters $\overline{x}$,
$\overline{y}$ with $\overline{x}=g_i$. In case (2), we choose local
parameters $\overline{x}$, $\overline{y}$ with $\overline{x}=g_i$,
$\overline{y}=g_j$. In case (3), choose local parameters
$\overline{x}$, $g_i$. By the Preparation Theorem \cite[Cor. 1, p.
145]{ZS}, $hg_j=g_i+\overline{x}^nk$ for some unit $h$, some $n\ge
1$ and some unit $k\in \R[[\overline{x}]]$. Then
$sg_i+tg_j=\overline{x}^n$ where $s= -\frac{1}{k}$ and
$t=\frac{h}{k}$. By the geometry of the situation, the units $s,t$
are positive units and $n$ is even. Take $\overline{y} = sg_i$, so
$\overline{x}^n-\overline{y} = tg_j$, and apply Theorem
\ref{t3m}(1). In case (4) choose local parameters $\overline{x}$,
$g_i$ with $\overline{x}=g_k$. As before, this yields
$sg_i+tg_j=\overline{x}^n$ for some units $s,t$ and some $n \ge 1$.
By the geometry of the situation, $s,t$ are positive units. Take
$\overline{y} = sg_i$, so $\overline{x}^n-\overline{y} = tg_j$, and
apply Theorem \ref{t3m}(2).
\end{proof}

\section{Application to equivariant saturated preorderings}
\label{jaka}

If $S=\{1 - x, 1 + x, 1 - y, 1 + y\}$ and $S'=\{2-x^2-y^2,(1-x^2)(1-y^2)\}$
then $K_S=K_{S'}$ is the unit square. Note that $T_S$ is saturated, by Theorem \ref{sat3}.
On the other hand, it can be easily verified that $1-x \not\in T_{S'}$,
hence $T_{S'}$ is not saturated.

Let $G=\langle a,b \vert a^4=b^2=(ab)^2=1\rangle$ be the fourth
dihedral group acting on $\R^2$ and $\R[x,y]$ in a ``standard
way". For every $G$-invariant subset $M$ of $\R[x,y]$ write
$M^G=\{m \in M \vert \forall g \in G \colon m^g=m\}$. We would
like to show that $T_{S'}$ is \textbf{$G$-saturated}, i.e.
$\psd(K_S)^G \subseteq T_{S'}$ or equivalently, $\psd(K_S)^G =
(T_{S'})^G$.

Clearly,  $\R[x,y]^G$ is an $\R$-algebra containing
\[
u(x,y)=x^2+y^2 \text{ and } v(x,y)=x^2 y^2.
\]
It can be shown that $u(x,y)$ and $v(x,y)$ are algebraically independent and that
they generate $\R[x,y]^G$. Hence, the mapping
\[
\tilde{\pi} \colon \R[u,v] \to \R[x,y]^G, \quad \tilde{\pi}(f)(u,v) = f(u(x,y),v(x,y))
\]
is an isomorphism. On the other hand, the mapping
\[
\pi \colon \R^2 \to \R^2, \quad \pi(x,y) = (u(x,y),v(x,y))
\]
is not onto. It is easy to see that $\pi(\R^2)=K_{\{u,v,u^2-4v\}}.$
The mapping $\pi$ is not one-to-one either. It can be shown that two
points have the same image if and only if they lie in the same
$G$-orbit. (We call $\pi$ the \textbf{orbit map} \rm and $\pi(\R^2)$
the \textbf{orbit space}.)

The set $\Delta=\{(x,y) \vert 0 \le y \le x \le 1\}$ (picture on the left) contains exactly
one point from each orbit of $K_S$.
\begin{center}
\includegraphics{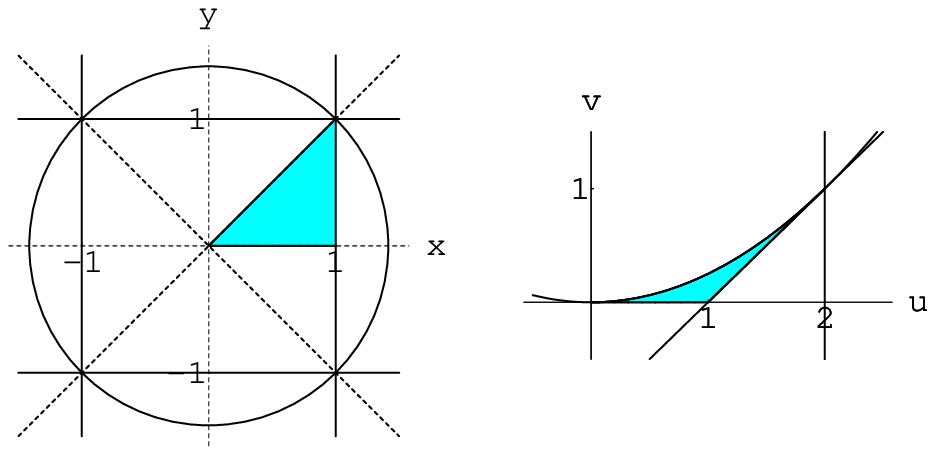}
\end{center}
Now we can compute $\pi(K_S)=\pi(\Delta)$
(picture on the right) by either parametrizing the boundary of $\Delta$ or the following way:
\[\begin{array}{c}
\pi(K_S)=\pi(K_{S'})=K_{\tilde{\pi}^{-1}(S')} \cap \pi(\R^2)
= \\ =
K_{\{2-u,1-u+v\}} \cap K_{\{u,v,u^2-4v\}}=K_{\{2-u,1-u+v,u,v,u^2-4v\}}.
\end{array}\]
By Corollary \ref{c7m}, the preordering  $T_{\{2-u,1-u+v,u,v,u^2-4v\}}$ is saturated. Hence
\[
\psd(K_S)^G = \tilde{\pi}(\psd(\pi(K_S))) \subseteq \tilde{\pi}(T_{\{2-u,1-u+v,u,v,u^2-4v\}}) \subseteq T_{S'}.
\]

\section{Proof of Theorem \ref{t3m} } \label{proof}

Assertion (2) follows from assertion (1), by going to
the extension ring $\R[[\sqrt{x},y]]$, so it suffices to prove
(1). We can assume $f\ne 0$. We know $\R[[x,y]]$ is a UFD
\cite[Th. 6, p. 148]{ZS}. Factor $f$ into irreducibles in
$\R[[x,y]]$. Using the Preparation Theorem, we can assume the
factorization has the form
\[
f = ux^mg = ux^m\prod_{i=1}^{\ell} p_i^{m_i}
\]
where $u$ is a unit and each $p_i = p_i(y)$ is a monic polynomial in $y$ with coefficients in $\R[[x]]$,
with all coefficients except the leading coefficient in the maximal ideal of $\R[[x]]$.
We can reduce to the case where $m=0$ or $1$ and $g$ has no repeated irreducible factors.
Since $\pm u$ is a square in $\R[[x,y]]$, we can assume further that $u=\pm 1$.

\smallskip

Since $y$ and $x^{2n}-y$ are obviously in the preordering generated by $y$ and
$x^{2n}-y$, we can assume $y \nmid g$ and $y-x^{2n} \nmid g$.
More generally, if $g$ has an irreducible factor $p$ which has constant sign
on the set $y>0$ in the real spectrum  (see \cite{BCR}) of $\R((x,y))$ then,
by part (2) of Theorem \ref{t2m}, $\pm p$ is in the preordering generated by $y$.
Similarly, if $p$ has constant sign on the set $x^{2n}>y$ in the real spectrum
of $\R((x,y))$ then, by part (2) of Theorem \ref{t2m}
(using the fact that $\R[[x,y]] = \R[[x,x^{2n}-y]]$),
$\pm p$ is in the preordering generated by $x^{2n}-y$.
Consequently, we can assume that $g$ has no such irreducible factors.

\smallskip

Fix an irreducible factor $p$ of $g$ and consider the discrete
valuation on $\R((x,y))$ with associated valuation ring
$\R[[x,y]]_{(p)}$. The residue field is $L =
\text{qf}\frac{\R[[x,y]]}{(p)} = \frac{\R((x))[y]}{(p)}$
\cite[Th. 6, p. 148]{ZS}. Set $\overline{y} = y+(p)$. Since $p \ne
y$, $p \ne y-x^{2n}$, we know that $\overline{y} \ne 0$,
$\overline{y} \ne x^{2n}$. $L$ is a finite extension of the complete
discrete valued field $\R((x))$ so it either has no orderings
(if the residue field is $\Bbb{C}$) or two orderings (if the residue
field is $\R$).

\smallskip

Claim 1: $L$ has no ordering satisfying $0< \overline{y} <x^{2n}$.
Otherwise, pulling this ordering back to $\R((x,y))$, using Baer-Krull,
yields two orderings on $\R((x,y))$ satisfying $0<y<x^{2n}$, one with
$p>0$ and one with $p<0$. Since an irreducible factor $q$ of $f$ different
from $p$ has the same sign at each of these two orderings, and since $p$ has
multiplicity 1 in $f$, one of these two orderings must make $f <0$.
This contradicts our assumption and proves the claim.

\smallskip

Claim 2: $L$ has an ordering satisfying $\overline{y} > x^{2n}$ and also an
ordering satisfying $\overline{y}<0$. By assumption $p=p(y)$ is not always
positive on the set $y>0$ in the real spectrum of $\R((x,y))$, so there
exists an ordering of $\R((x,y))$, with real closure $R$ say, with $y>0$
and $ p(y) <0$, so the polynomial $p(t)$ (obtained by replacing $y$ by the new variable $t$) has a root $a >y$ in $R$.
Then $\overline{y} \mapsto a$ defines an $\R((x))$-embedding of $L$ into
$R$, so $L$ has an ordering satisfying $\overline{y} >0$, i.e.,
$\overline{y} > x^{2n}$. We prove the second assertion when $\deg(p)$ is odd.
The proof when $\deg(p)$ is even is similar. By assumption $p$ is not always
negative on the set $x^{2n}>y$ in the real spectrum of $\R((x,y))$,
so there exists an ordering of $\R((x,y))$ with real closure $R$ say,
with $y<x^{2n}$ and $p(y)>0$, so the polynomial $p(t)$ has a root $a<y$ in $R$.
Then $\overline{y} \mapsto a$ defines an $\R((x))$-embedding of $L$
into $R$, so $L$ has an ordering satisfying $\overline{y} <x^{2n}$,
i.e., $\overline{y} <0$.

\smallskip

Denote the valuation on $L$ by $v$. Since $p(\overline{y})=0$ we see that
$v(\overline{y})>0$. Since $L$ has an ordering satisfying
$\overline{y}>x^{2n}$, it follows that $v(\overline{y}) \le v(x^{2n})$.
At the same time, $v(\overline{y})=v(x^{2n})$ is not possible.
(If $v(\overline{y}) = v(x^{2n})$ then $\overline{y} = ux^{2n}$,
$u$ a unit. Since $\overline{y}$ is positive at one ordering and negative
at the other, the same would be true for $u$, which is not possible.)
Thus $0<v(\overline{y}) < v(x^{2n})$.

\smallskip

Of course, since the various roots $a$ of $p$ in the algebraic closure of
$\R((x))$ are conjugate to $\overline{y}$ over $\R((x))$,
they all have the same value $v(a) = v(\overline{y})$.

\smallskip

Write $f = \pm x^mp_1\dots p_{\ell}$ where the $p_i$ are irreducible,
$p_i = \sum_{j=0}^{k_i} b_{ij}y^j$, $b_{ik_i}=1$, $v(b_{i0}) = k_iv(a_i)$,
$v(b_{ij}) \ge (k_i-j)v(a_i)$, where $a_i$ is a fixed root of $p_i$.
We know $0<v(a_i)<v(x^{2n})$. Decompose $f$ as
\begin{equation}
\label{eq1}
f = f(0)+\sum_{\underline{j} \ne (0,\dots,0)}
\pm x^mb_{\underline{j}} y^{j_1+\dots+j_{\ell}}
\end{equation}
where $\underline{j} := (j_1,\dots,j_{\ell})$,
$b_{\underline{j}}:= b_{1j_1}\dots b_{\ell j_{\ell}}$
and $f(0) := \pm x^m b_{10}\dots b_{\ell 0}$.

\smallskip

Claim 3: $f(0)$ is positive at both orderings of $\R((x))$, i.e.,
$f(0)$ is a square in $\R[[x]]$.  Suppose to the contrary that
$f(0)$ is negative at one of the orderings of $\R((x))$.
Consider the discrete valuation on $\R((x,y))$ with valuation ring
$\R[[x,y]]_{(y)}$ and residue field $\R((x))$. Pulling the culprit
ordering of $\R((x))$ back to $\R((x,y))$, using Baer-Krull,
yields two orderings of $\R((x,y))$, one of which satisfies
$x^{2n}>y>0$ and $f<0$. This is a contradiction.

\smallskip

We write each term $\pm x^mb_{\underline{j}}y^{j_1+\dots+j_{\ell}}$,
$\underline{j} \ne (0,\dots,0)$ in (\ref{eq1})
as $$(c_{\underline{j}}\pm x^mb_{\underline{j}})y^{j_1+\dots+j_{\ell}}
+ c_{\underline{j}}(x^{2n(j_1+\dots+j_{\ell})}-y^{j_1+\dots+j_{\ell}})-
c_{\underline{j}}x^{2n(j_1+\dots+j_{\ell})}.$$ Factoring in the obvious way,
we see that $x^{2n(j_1+\dots+j_{\ell})}-y^{j_1+\dots+j_{\ell}}$ lies in
the preordering generated by $x^{2n}-y$ and $y$.
To complete the proof, it suffices to show we can choose the elements
$c_{\underline{j}} \in \R[[x]]$, $\underline{j} \ne (0,\dots,0)$,
so that $$\ c_{\underline{j}}\pm x^mb_{\underline{j}}, \ c_{\underline{j}} \
\text{ and } \ f(0)-\sum_{\underline{j} \ne (0,\dots,0)} c_{\underline{j}}x^{2n(j_1+\dots+j_{\ell})}$$
are squares in $\R[[x]]$. Since $\underline{j} \ne (0,\dots,0)$,
\begin{align*} v(x^mb_{\underline{j}}) = &\ v(x^m)+\sum_i v(b_{ij_i}) \\
\ge &\ v(x^m)+\sum_i (k_i-j_i)v(a_i) \\ = &\ v(x^m)+\sum_i k_iv(a_i)-
\sum_i j_iv(a_i) \\ > &\ v(x^m)+\sum_i k_iv(a_i)-\sum_i j_iv(x^{2n})
\\ = &\ v(x^m)+\sum_i v(b_{i0})- \sum_i j_iv(x^{2n}) \\
= &\ v(\frac{f(0)}{x^{2n(j_1+\dots+j_{\ell})}}).
\end{align*}
We choose the $c_{\underline{j}}$ as follows:
If $\underline{j} \ne (k_1,\dots,k_{\ell})$ or $\underline{j}=(k_1,\dots,k_{\ell})$ and $m=1$,
then $x^mb_{\underline{j}}$ has positive value. In this case, we choose $c_{\underline{j}}$
with small positive lowest coefficient and with
$$v(c_{\underline{j}})= \max\{v(\frac{f(0)}{x^{2n(j_1+\dots+j_{\ell})}}), 0\}.$$
In the remaining case, where $m=0$ and $\underline{j}
= (k_1,\dots,k_{\ell})$, $b_{ij_i} = 1$, $i=1,\dots, \ell$,
and we choose $c_{\underline{j}} = 1$.
The point is, with this choice of $c_{\underline{j}}$,
for each $\underline{j} \ne (0,\dots,0)$,
either $c_{\underline{j}}x^{2n(j_1+\dots+j_{\ell})}$ has larger value than $f(0)$ or,
it has the same value as $f(0)$, but its lowest coefficient is small.

\section{Concluding Remarks}
\label{ljubljana}

1. Theorems \ref{t2m} and \ref{t3m} do not cover all interesting cases. The general question remains: When is a finitely generated preordering of $\R[[x,y]]$ saturated? Recall that the \textbf{saturation} of a preordering $T$ of a general ring $A$ (commutative with 1) is the intersection of all orderings of $A$ containing $T$, and that $T$ is said to be \textbf{saturated} if it coincides with its saturation.

2. The following preorderings of $\R[[x,y]]$ are saturated:
\begin{enumerate}
\item[(i)] The preordering of $\R[[x,y]]$ generated by $y$ and $y-x^n$, $n$ odd, $n\ge 3$.

\item[(ii)] The preordering of $\R[[x,y]]$ generated by $y^2-x^n$, $n$ odd, $n\ge 3$.
\end{enumerate}
 Saturation in case (i) is a consequence of saturation in case (ii), by going to the extension ring $\R[[x,\sqrt{y}]]$. In an analogous way, saturation in case (ii) is a consequence of \cite[Th. 5.1]{f}, by going to the extension ring $$A := \frac{\R[[x,y]][z]}{(z^2-y^2+x^n)} = \frac{\R[[x,y,z]]}{(z^2-y^2+x^n)}.$$ \cite[Th. 5.1]{f} asserts that the ring $A$ defined above satisfies psd = sos, i.e., that the preordering of $A$ consisting of sums of squares in saturated.\footnotemark\footnotetext{ The authors wish to thank the referee for bringing this result to their attention, and pointing out its application to cases (i) and (ii).} Actually, \cite[Th. 5.1]{f} is stated in terms of 
analytic function germs. What we are quoting here is the formal power series version of the result.  Note: Knowing saturation holds in case (i) allows one to extend Corollary \ref{c7m}, adding an additional case to the list.

3. It is still not known if the following preorderings of $\R[[x,y]]$ are saturated:
\begin{enumerate}
\item[(iii)] The preordering of $\R[[x,y]]$ generated by $y,y-x^n$ and $x^m-y$, $n$ odd, $m$ even, $n>m \ge 2$.

\item[(iv)] The preordering of $\R[[x,y]]$ generated by $y, y-x^n, x^m-y$ and $x^m(1+a(x))-y$, $n$ odd, $m$ even, $n>m \ge 2$, $a(x) \in \R[[x]]$, $a(0)=0$.
\end{enumerate}
A positive answer in cases (iii) and (iv), coupled with what we already know by Theorems \ref{t2m} and \ref{t3m} and case (i) above, would complete our understanding of saturation for preorderings of $\R[[x,y]]$ generated by finitely many elements of order $\le 1$. The proof of this assertion will not be given here. The \textbf{order} of $f\in \R[[x,y]]$ is defined to be the greatest integer $k\ge 0$ such that $f \in \frak{m}^k$, where $\frak{m}$ denotes the maximal ideal of $\R[[x,y]]$.

4. The case where some of the generators have order $\ge 2$ seems to be pretty much wide open. Case (ii) is of this type, as is the example given earlier, in Remark \ref{rm5} (ii). If $g \in \R[x,y]$ and psd = sos holds for the ring $A = \frac{\R[[x,y]][z]}{(z^2-g)}$, then the preordering of $\R[[x,y]]$ generated by $g$ is saturated. Combining this with \cite[Th. 3.1]{f} yields a variety of examples of this sort where $g$ has order 2 or 3 and saturation holds.

\bn
\bn
\adresse
\end{document}